\newtheorem{theorem}{Theorem}[section]
\newtheorem{lemma}[theorem]{Lemma}
\newtheorem{remark}[theorem]{Remark}
\theoremstyle{remark}
\newcommand{\ii}{\int_{\mathbb{R}^3}}
\newcommand{\lt}[1]{\Vert#1\Vert_{L^2}}
\newcommand{\hn}[1]{\Vert#1\Vert_{H^N}}
\newcommand{\hnn}[1]{\Vert#1\Vert_{H^{N+1}}}
\begin{document}

\title{NON-UNIQUENESS OF OBERBECK-BOUSSINESQ SYSTEM WITH GRAVITATIONAL FIELD}
\author{Xiaoran Liu}
\address{School of Mathematical Sciences, University of Science and Technology of China, Hefei 230026, Anhui, China}
\email{liuxiaoran@mail.ustc.edu.cn}
\subjclass[2010]{35L71 (primary),35B40,37K40}

\begin{abstract}
    We consider the Oberbeck-Boussinesq system with gravitational force on the whole space. We prove the non-uniqueness of the system applying the unstable profile of Navier-Stokes equation.
\end{abstract}

\maketitle

\section{introduction}
\numberwithin{equation}{section}
We consider the Oberbeck-Boussinesq system on $\mathbb{R}^3$ with gravitational field
\begin{equation}
\begin{cases}
\partial_tu+u\cdot\nabla u+\nabla p=\Delta u+\theta\nabla G+f\\
\nabla\cdot u=0\\
\partial_t\theta+u\cdot\nabla\theta=\Delta\theta+h,
\end{cases}
\end{equation}
where $u(x,t)$ is the velocity field, $\theta(x,t)$ is the temperature, $p(x,t)$ is the pressure, $\nabla G=\frac{x}{|x|^3}$ is the gravitational force acting on the fluid and $f$ and $h$ are external force and heat source, respectively. The Oberbeck-Boussinesq equation is an approximation equation used to describe the motion of the incompressible fluid system driven by the thermal convection. Such a sysem was proposed by Oberbeck \cite{O} and Boussiness \cite{B}, and has been studied as a successful model for hydrodynamic stability. (1.1) can be derived as a singular low Mach number limit of the full Navier-Stokes-Fourier system, see \cite{BFO,FNS}. Saltzman \cite{BO} considered this problem using Fourier representation. Later, Lorenz \cite{L} evaluated the numerical solution and discovered the first strange attractor. Besides, there are also many works on generalized Oberbeck-Boussinesq system. For example, Rajagopal, Saccomandi and Vergori generalized this system for compressible fluid with pressure- and temperature-dependent material properties \cite{RSV}. Also Grandi and Passerini presented a new deriviation for gases \cite{GP}.\par
The well-posedness of Oberbeck-Boussinesq system on a bounded domain has been widely studied. In \cite{K} Komo proved the existence of Serrin type strong solution for small initial data. Also Abbatiello, Bul$\rm\acute a\check c$ek and Lear studied the weak solution of generalized Navier-Stokes-Fourier system \cite{AMD}. Moreover, Abbatiello and Feireisl proved the existence of weak solution with no-slip boundary condition \cite{AE}. In \cite{DG}, well-posedness of Oberbeck-Boussinesq with nonlinear diffusion was proved.\par
In some cases, $\nabla G$ is taken to be $(0,0,1)$. Brandolese and Schonbe \cite{LE} proved the existence of weak solution of such type equation, see also \cite{DP}. Global well-posedness was proved for small initial data, see \cite{GL,KP}. For results on more general versions of the Oberbeck-Boussinesq system, one could refer to \cite{DH,EEM}, to name a few. For an overview of the Oberbeck-Boussinesq system, see \cite{Z}.\par
On the other hand, for Oberbeck-Boussinesq on unbounded domains, we would also consider the case $G=\frac{1}{|x|}$ if the size of the object acting on fluid is negligible. In \cite{BK}, Brandolese and Karch proved the existence and uniqueness of solution for homogeneous initial data, as an extension of Jia and $\rm\check S$ver$\rm\acute a$k's work on Navier-Stokes \cite{HV}.\par
Despite of so many uniqueness results as mentioned, the converse is hardly studied. Since our non-uniqueness result is related to that of Navier-Stokes, we'd like to introduce some developments on non-uniqueness for hydrodynamic system.\par
Non-uniqueness of weak solutions is very important in turbulence theory. In \cite{S} Scheffer firstly gave a remarkable example of non-unique solution to 2 dimensional Euler equation, where the velocity lies in $L^2$ in space and time. Shnirelman then constructed non-unique solution in a different way \cite{S1,S2}. In 09' De Lellis and Szekelyzidi proved the non-uniqueness result for $v\in L^\infty_{t,x}$ for arbitrary dimension. More recently, Vishik \cite{V1,V2} considered the 2 dimensional Euler equation in vorticity form and proved the non-uniqueness for initial vorticity in $L^1\cap L^p$, which is sharp due to the classical work of Yudovich \cite{Y}. Based on Vishik's work, Albritton, Bru$\rm\acute e$ and Colombo proved the non-uniqueness result for 3 dimensional Navier-Stokes \cite{ABC}. It was also proved by Buckmaster amd Vicol, using the method of convex integration \cite{BV}. Furthermore, Bru$\rm\acute e$, Colombo and Kumar used a new convex integral scheme to prove the non-uniqueness of 2 dimensional Euler but without forcing term \cite{BCK}.\par

Our main theorem is as follows:
\begin{theorem}
    There exists $t_0>0,f,g\in L_t^1L_x^2((0,t_0)\times\mathbb{R}^3)$ such that (1.1) has infinitely many distributional solutions $(u,\theta)\in L_t^2L_x^2((0,t_0)\times\mathbb{R}^3)$ with initial data $(u_0,\theta_0)=(0,0)$.
\end{theorem}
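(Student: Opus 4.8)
The plan is to adapt the Albritton--Brué--Colombo strategy for the 3D Navier--Stokes equations to the coupled Oberbeck--Boussinesq system. The starting observation is that the temperature equation is a forced advection--diffusion equation for $\theta$, and the buoyancy term $\theta\nabla G$ couples it back into the momentum equation. The cleanest approach is to \emph{decouple} the non-uniqueness mechanism from the temperature: I would look for solutions in which $\theta\equiv 0$, so that the buoyancy force $\theta\nabla G$ vanishes identically and the system collapses to the forced Navier--Stokes equations
\begin{equation}
\partial_tu+u\cdot\nabla u+\nabla p=\Delta u+f,\qquad \nabla\cdot u=0,
\end{equation}
together with the trivial solution $\theta\equiv 0$ of the heat equation with $h\equiv 0$. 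For this to be consistent one must check that if $\theta_0=0$ and $h=0$ then $\theta\equiv 0$ is indeed the relevant (self-similar) branch, which is immediate. Then any two distinct Navier--Stokes solutions $(u^{(1)},p^{(1)})$ and $(u^{(2)},p^{(2)})$ with the same force $f$ and zero initial data yield two distinct Oberbeck--Boussinesq solutions $(u^{(1)},0)$ and $(u^{(2)},0)$, and the non-uniqueness for the full system follows verbatim from the Navier--Stokes non-uniqueness.

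Consequently the bulk of the argument reduces to reproducing the construction of \cite{ABC}. First I would recall Vishik's unstable vortex profile: there is a compactly-supported (or suitably decaying) radial vorticity $\bar\omega$ whose associated steady Euler velocity $\bar u$, when viewed as a background for the linearized operator, possesses an unstable eigenvalue $\lambda$ with $\mathrm{Re}\,\lambda>0$ and a corresponding smooth eigenfunction. Passing to the self-similar variables $\xi=x/\sqrt{t}$, $\tau=\log t$, the Navier--Stokes equations become an autonomous-in-background system whose linearization around the self-similar profile inherits this unstable eigenvalue. The second step is to build, by a spectral/fixed-point argument, a one-parameter family of \emph{trajectories on the unstable manifold} of the self-similar profile: solutions $u_a(\xi,\tau)$ that converge to the profile as $\tau\to-\infty$ at the rate $e^{\mathrm{Re}\,\lambda\,\tau}$ and are indexed by the amplitude $a$ along the unstable eigendirection. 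Undoing the self-similar change of variables turns each such trajectory into a genuine Navier--Stokes solution on a short time interval $(0,t_0)$, all sharing the zero initial datum (because the self-similar scaling sends $\tau\to-\infty$ to $t\to 0^+$ and the profile decays), and distinct values of $a$ give distinct solutions. The forcing $f$ is the fixed (explicitly-constructed) body force needed to make the Vishik profile an exact self-similar solution.

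The main technical obstacle is verifying that the borrowed construction lives in the function spaces demanded by the theorem statement, namely $f,h\in L^1_tL^2_x$ and $(u,\theta)\in L^2_tL^2_x$ on $(0,t_0)\times\mathbb{R}^3$. Here the self-similar scaling is the crucial bookkeeping device: a self-similar velocity $u(x,t)=t^{-1/2}U(x/\sqrt t)$ has $\lt{u(\cdot,t)}^2 = t^{1/2}\lt{U}^2$, which is integrable in $t$ near $0$, so $u\in L^2_tL^2_x$ provided $U\in L^2(\mathbb{R}^3)$; similarly the force scales as $t^{-3/2}F(x/\sqrt t)$ and one checks $\Vert f(\cdot,t)\Vert_{L^2}= t^{-3/4}\lt{F}$, whose $L^1_t$ norm near $0$ is finite. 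I would therefore devote care to confirming the spatial decay/integrability of the Vishik profile and of the eigenfunction so that these $L^2$-in-space bounds hold, and to controlling the nonlinear remainder on the unstable manifold uniformly in $\tau$. With $\theta\equiv0$ and $h\equiv0$ the temperature contributes nothing to these norms, so the only real work is the Navier--Stokes part; the gravitational coupling $\nabla G=x/|x|^3$, despite its singularity at the origin, never enters because it is multiplied by $\theta\equiv0$.
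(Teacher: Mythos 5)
Your proposal is correct as a proof of the literal statement, but it takes a genuinely different route from the paper: you decouple the system by taking $\theta\equiv 0$ and $h\equiv 0$, so the buoyancy term $\theta\nabla G$ vanishes identically and the theorem reduces verbatim to the Albritton--Bru\'e--Colombo non-uniqueness theorem for the forced Navier--Stokes equations (together with the standard upgrade from two solutions to an infinite family by rescaling the amplitude of the unstable mode, which is exactly what the paper does in Remark 3.2); your scaling bookkeeping ($\Vert u(\cdot,t)\Vert_{L^2}^2\sim t^{1/2}\Vert U\Vert_{L^2}^2$, $\Vert f(\cdot,t)\Vert_{L^2}\sim t^{-3/4}$) also matches the paper's. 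The paper instead keeps the coupling alive: it chooses a nontrivial temperature profile $\bar\Theta(\xi,\tau)=e^{b\tau}\Theta(\xi)$, solves for both $F$ and $H$, and runs a contraction mapping for the coupled perturbation system $(U_p,\Theta_p)$ in exponentially weighted-in-time spaces. The genuinely new ingredient this forces is its Lemma 3.1, a parabolic smoothing estimate for the semigroup $e^{\tau L}$ generated by $L=\Delta+\frac{1}{2}(1+\xi\cdot\nabla)+\bar U\cdot\nabla$, needed because the heat semigroup alone is not regular enough to absorb the drift term $\bar U\cdot\nabla\Theta$ in the Duhamel iteration; the singular gravity term $\Theta_p\nabla\frac{1}{|\xi|}$ is then controlled via the Leray projector and Hardy's inequality. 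What each approach buys: yours is far shorter and fully suffices for the theorem as stated, since the statement only requires the existence of \emph{some} forces $f,h$; the paper's construction buys the stronger conclusion emphasized in its introduction --- that the velocity instability propagates into the temperature field, so the temperature distribution itself is non-unique --- which your construction, having $\theta\equiv 0$ for every solution, cannot exhibit.
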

The theorem tells that the turbulence could also cause the non-uniqueness of temperature distribution. One of the difficulties is that in the equation for temperature, the heat semigroup is not regular enough to control the convection term brought by the unstable profile $\bar u$. So our approach is to add this term into the generator and estimate the new semigroup $e^{\tau L}$. Due to similar reasons the temperature profile we choose is not steady, but smaller in time.\par

This paper is organized as follows. Section 2 introduces notations and known results about instability property of Navier-Stokes equation, which play an important role in our proof. In Section 3 we prove the main theorem by a contraction mapping argument.


\section{notations and known results}
It worth mentioning that (1.1) possesses the scaling property: if $(u,p,\theta,f,g)(x,t)$ is a solution, and set
$$
\begin{aligned}
    u_\lambda(x,t)=\lambda u(\lambda^2x,\lambda t),\theta_\lambda(x,t)=\lambda\theta(\lambda^2x,\lambda t),\\
    f_\lambda(x,t)=\lambda^3f(\lambda^2x,\lambda t),h_\lambda(x,t)=\lambda^3h(\lambda^2x,\lambda t),p_\lambda(x,t)=\lambda^2p(\lambda^2x,\lambda t),
\end{aligned}
$$
then $(u_\lambda,p_\lambda,\theta_\lambda,f_\lambda,h_\lambda)(x,t)$ is also a solution of (1.1). Note that the Navier-Stokes equations also have such property. We introduce the self-similar coordinates
$$
\xi=\frac{x}{\sqrt{t}},\tau=\log t.
$$
In the self similar coordinates we set
\begin{equation}
\begin{aligned}
    u(x,t)=\frac{1}{\sqrt{t}}U(\xi,\tau),\theta(x,t)=\frac{1}{\sqrt{t}}\Theta(\xi,\tau),\\
    f(x,t)=\frac{1}{t^\frac{3}{2}}F(\xi,\tau),h(x,t)=\frac{1}{t^\frac{3}{2}}H(\xi,\tau),p(x,t)=\frac{1}{t}P(\xi,\tau),
\end{aligned}
\end{equation}
then (1.1) becomes
\begin{equation}
\begin{aligned}
    \partial_\tau U-\frac{1}{2}(1+\xi\cdot\nabla)U-\Delta U+U\cdot\nabla U+\nabla P=\Theta\nabla\frac{1}{|\xi|}+F\\
    \nabla\cdot U=0\\
    \partial_\tau\Theta-\frac{1}{2}(1+\xi\cdot\nabla)\Theta-\Delta\Theta+U\cdot\nabla\Theta=H
\end{aligned}
\end{equation}
We say a solution is a self-similar solution, if it is stationary in self-similar coordinates. The study of self-similar solutions plays an important role in many problems. Firstly in \cite{JS,HV}, Jia, $\rm\check S$ver$\rm\acute a$k and Guillod found numerically the unstable profile. Moreover, in \cite{ABC} Albritton, Bru$\rm\acute e$ and Colombo proved this rigorously, which will be exploited in this paper. Later it was extended to related problems, like fractional Navier-Stokes \cite{LMZ} and MHD \cite{CCL}. In \cite{BK}, Brandoless and Karch studied the well-posedness of Oberbeck-Boussinesq for homogeneous initial data.\par
In the following we always use capital letters to represent the corresponding function in self-similar coordinates according to (2.1). For example, $u_p(x,t)=\frac{1}{\sqrt{t}}U(\frac{x}{\sqrt{t}},\log t)=e^{-\frac{\tau}{2}}U(\xi,\tau)$.\par
Consider the Navier-Stokes equation on $\mathbb{R}^3$
$$
\begin{cases}
\partial_tu+u\cdot\nabla u+\nabla p=\Delta u+f,\\
\nabla\cdot u=0.
\end{cases}
$$
In self-similar coordinates after applying the Leray projector, the first equation reads
$$
\partial_\tau U-\frac{1}{2}(1+\xi\cdot\nabla)U-\Delta U+U\cdot\nabla U+\nabla P=F.
$$
Therefore, if we linearize this equation at some profile $\bar U$, we get the linearized equation
$$
    \partial_\tau U-L_{ss}U=0
$$
where the linearized operator is
$$
L_{ss}U=\frac{1}{2}(1+\xi\cdot\nabla)U+\Delta U+\mathbb{P}(U\cdot\nabla\bar U+\bar U\cdot\nabla U).
$$
The instability of Navier-Stokes was studied by Albritton, Bru$\rm\acute e$, and Colombo in \cite{ABC}. They proved the existence of unstable profile $\bar U$ such that the linearized operator has an unstable eigenvalue:
\begin{lemma}[\cite{ABC}, Theorem 1.3]
    There exists a divergence-free vector field $\bar U\in C_c^\infty$ such that the linearized operator $L_{ss}:D(L_{ss})\subset L_\sigma^2\to L_\sigma^2$ has a maximally unstable eigenvalue $\lambda$ with smooth eigenfunction $\rho\in H^k\forall k$, that is
    $$
L_{ss}\rho=\lambda\rho
$$
and
$$
a=\Re\lambda=\sup_{z\in\sigma(L_{ss})}\Re z>0.
$$
Here $L_\sigma^2={u\in L^2:\nabla\cdot u=0}$.
\end{lemma}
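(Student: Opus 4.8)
The plan is to follow the strategy of Albritton, Bru\'e and Colombo, which itself rests on Vishik's two-dimensional instability mechanism. First I would recall Vishik's ingredient at the Euler level: there is a smooth, radially symmetric, compactly supported vorticity profile for the two-dimensional Euler equation whose associated linearized operator possesses an eigenvalue with strictly positive real part. The proof of this fact reduces, after separating the angular Fourier modes $e^{im\varphi}$, to a one-dimensional spectral problem for an ordinary differential operator in the radial variable, and the unstable eigenvalue is located by a continuity/degree argument as the amplitude of the profile is varied. I would take this planar unstable vortex as the seed of the construction.

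Next I would lift the two-dimensional object to $\mathbb{R}^3$. Extending the planar velocity field so that it is independent of the third coordinate produces a smooth, divergence-free, compactly supported vector field $\bar U$ (a $2\tfrac12$-dimensional flow). Restricting the linearized similarity operator $L_{ss}U=\frac12(1+\xi\cdot\nabla)U+\Delta U+\mathbb{P}(U\cdot\nabla\bar U+\bar U\cdot\nabla U)$ to perturbations sharing this symmetry, the advection-stretching term $\mathbb{P}(U\cdot\nabla\bar U+\bar U\cdot\nabla U)$ collapses to the two-dimensional Euler linearization, while the diffusion $\Delta U$ and the scaling drift $\tfrac12(1+\xi\cdot\nabla)U$ persist as additional contributions.

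The crux, and the step I expect to be the main obstacle, is to show that the unstable eigenvalue survives the passage from the Euler operator to the full similarity operator $L_{ss}$. The difficulty is genuinely singular: $\Delta$ is the top-order term, so it is a singular perturbation of the (first-order) Euler operator and cannot be handled by naive perturbation theory. The device is a concentration rescaling of $\bar U$, linking amplitude and length scale so that the advection term dominates while the effective viscosity and drift become small, keeping the location of the Euler eigenvalue fixed. One then shows that the essential spectrum of $L_{ss}$ stays confined to a fixed left half-plane (it is governed by the Ornstein--Uhlenbeck-type generator $\tfrac12(1+\xi\cdot\nabla)+\Delta$, with the advection acting as a relatively compact perturbation), so the isolated unstable Euler eigenvalue perturbs to a genuine eigenvalue $\lambda$ of $L_{ss}$ that remains in $\{\Re z>0\}$; this is made rigorous through resolvent bounds comparing the two operators.

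Finally I would record the qualitative assertions of the statement. Smoothness $\rho\in H^k$ for every $k$ follows from elliptic regularity applied to $L_{ss}\rho=\lambda\rho$: the identity makes $\Delta\rho$ as regular as the lower-order terms, and a bootstrap raises the regularity indefinitely. The maximality $a=\Re\lambda=\sup_{z\in\sigma(L_{ss})}\Re z>0$ is obtained by noting that, since the essential spectrum lies strictly to the left of the imaginary axis, the spectrum in the open right half-plane consists of finitely many eigenvalues of finite multiplicity; selecting the one with the largest real part yields the maximally unstable $\lambda$.
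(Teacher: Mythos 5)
You should first know that the paper itself contains no proof of this lemma: it is quoted verbatim as Theorem 1.3 of Albritton--Bru\'e--Colombo \cite{ABC} and used as a black box, so the only meaningful comparison is with the proof in \cite{ABC}. Your sketch does follow the broad architecture of that proof: Vishik's unstable two-dimensional vortex, survival of the Euler eigenvalue under the singular viscous/drift perturbation via an amplitude--concentration scaling, confinement of the essential spectrum to a left half-plane, elliptic bootstrap for smoothness of the eigenfunction, and maximality from discreteness of the spectrum in the right half-plane.

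However, the sketch has a concrete false step exactly at the point you flag as structural: the two-to-three dimensional lift. A nonzero velocity field that is independent of $x_3$ is never compactly supported in $\mathbb{R}^3$, so ``extending the planar field trivially'' cannot produce the $\bar U\in C_c^\infty(\mathbb{R}^3)$ the statement requires; worse, perturbations sharing that symmetry are not in $L^2(\mathbb{R}^3)$, so the spectral problem for $L_{ss}:D(L_{ss})\subset L_\sigma^2\to L_\sigma^2$ cannot even be posed on such fields, and no eigenvalue of the planar operator transfers to the three-dimensional one by ``restriction to symmetric perturbations.'' This passage is one of the genuinely delicate points in \cite{ABC}: the background must be honestly three-dimensional (compactly supported in all variables), and the vertical behavior of the eigenfunction must be controlled through the structure of the drift-diffusion part $\frac{1}{2}(1+\xi\cdot\nabla)+\Delta$, which forces decay in $\xi_3$ rather than $\xi_3$-invariance; in particular the Leray projector does not respect a naive tensor-product ansatz. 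Beyond this, the two analytic cores --- Vishik's spectral analysis of the linearized Euler problem and the resolvent comparison showing the Euler eigenvalue survives the singular perturbation --- are asserted rather than argued. Since the paper only cites the result, none of this affects the paper; but as a standalone proof of the lemma, your proposal has a genuine gap at the dimensional lift and otherwise remains an outline of the strategy of \cite{ABC}.
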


\begin{remark}
It can be tracked from \cite{ABC} that by mulitplying the unstable profile by a large number, $a$ can be made sufficiently large. Without loss of generiosity we set $a>1$.
\end{remark}

Moreover, they also proved that the semigroup generated by $L_{ss}$ enjoy the parabolic regularity estimate.

\begin{lemma}[\cite{ABC}, Lemma 4.4]
    For all $\sigma_2\geq\sigma_1\geq0$, it holds
    $$
\Vert e^{\tau L_{ss}}U\Vert_{H^{\sigma_2}}\leq\frac{C(\sigma_1,\sigma_2,\delta)}{\tau^{(\sigma_2-\sigma_1)/2}}e^{(a+\delta)\tau}\Vert U\Vert_{H^{\sigma_1}}
    $$
    for any $U\in H^{\sigma_1}\cap L_\sigma^2$.
\end{lemma}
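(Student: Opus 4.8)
The plan is to isolate the two independent features of the claimed inequality. The exponential factor $e^{(a+\delta)\tau}$ is a large-time effect dictated by the spectrum of $L_{ss}$, while the singular factor $\tau^{-(\sigma_2-\sigma_1)/2}$ is a small-time effect dictated by the parabolic principal part $\Delta$. I would first establish a plain $L^2$ growth bound $\Vert e^{\tau L_{ss}}U\Vert_{L^2}\le C_\delta e^{(a+\delta)\tau}\Vert U\Vert_{L^2}$ and, separately, a short-time smoothing bound valid for $\tau\in(0,1]$, and then glue the two together using the semigroup law.

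For the growth bound I write $L_{ss}=L_0+B$ with $L_0=\Delta+\frac12(1+\xi\cdot\nabla)$ and $B=\mathbb P(\,\cdot\,\nabla\bar U+\bar U\cdot\nabla\,\cdot\,)$. The operator $L_0$ is the self-similar form of the heat operator, so its semigroup is explicit: $e^{\tau L_0}U(\xi)=e^{\tau/2}\big(e^{(e^\tau-1)\Delta}U\big)(e^{\tau/2}\xi)$, from which a direct change of variables gives $\Vert e^{\tau L_0}U\Vert_{L^2}\le e^{-\tau/4}\Vert U\Vert_{L^2}$ and shows $e^{\tau L_0}$ to be analytic. Since $\bar U\in C_c^\infty$, the coefficients of the first-order operator $B$ are bounded, so $B$ is $L_0$-bounded with relative bound $0$; hence $L_{ss}$ also generates an analytic semigroup. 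For analytic (in particular, norm-continuous for $\tau>0$) semigroups the spectral mapping theorem holds, so the growth bound equals the spectral bound $\sup_{z\in\sigma(L_{ss})}\Re z=a$ supplied by the first Lemma, and the stated $L^2$ growth follows.

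For the short-time smoothing, on $(0,1]$ the exponential factor is harmless and I must produce $\Vert e^{\tau L_{ss}}U\Vert_{H^{\sigma_2}}\le C\tau^{-(\sigma_2-\sigma_1)/2}\Vert U\Vert_{H^{\sigma_1}}$. For $L_0$ this is the classical heat smoothing $\Vert e^{s\Delta}f\Vert_{H^{\sigma_2}}\lesssim s^{-(\sigma_2-\sigma_1)/2}\Vert f\Vert_{H^{\sigma_1}}$ transported through the conjugacy above, with $s=e^\tau-1\simeq\tau$ for $\tau$ small; equivalently it follows from a time-weighted energy estimate in which the dissipation $-\Vert\nabla U\Vert_{L^2}^2$ controls the drift, noting that $\frac12\xi\cdot\nabla$ integrates by parts into the harmless zeroth-order term $-\frac34\Vert U\Vert_{L^2}^2$. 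To restore $B$ I iterate the Duhamel identity $e^{\tau L_{ss}}=e^{\tau L_0}+\int_0^\tau e^{(\tau-s)L_0}B\,e^{sL_{ss}}\,ds$ into a Neumann series; each occurrence of $B$ costs one derivative, but each intervening factor $e^{\cdot L_0}$ restores it at the price of a half power of the elapsed time, so the $n$-th term is governed by a nested Beta integral whose total constant is summable for $\tau\le1$ and preserves the rate $\tau^{-(\sigma_2-\sigma_1)/2}$.

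Finally I glue the pieces. For $\tau\ge1$ I factor $e^{\tau L_{ss}}U=e^{L_{ss}}\big(e^{(\tau-1)L_{ss}}U\big)$: the inner factor grows in $L^2$ by at most $C_{\delta/2}e^{(a+\delta/2)(\tau-1)}\Vert U\Vert_{H^{\sigma_1}}$, and the time-one map $e^{L_{ss}}$ sends $L^2$ into $H^{\sigma_2}$ with a fixed constant by the short-time bound; since $\tau^{(\sigma_2-\sigma_1)/2}\le Ce^{(\delta/2)\tau}$ on $[1,\infty)$, the polynomial prefactor is absorbed into the exponential and the full estimate follows. \emph{The main obstacle} is the short-time step: because $B$ is exactly first order, controlling the derivative it loses while keeping the sharp power $\tau^{-(\sigma_2-\sigma_1)/2}$ — i.e.\ showing the iterated Duhamel series converges without degrading the smoothing rate — is the delicate point, together with verifying that the unbounded drift coefficient $\xi$ neither destroys the analyticity used in the growth bound nor spoils the energy identities.
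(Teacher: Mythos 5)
The paper never proves this statement --- Lemma 2.3 is imported verbatim from \cite{ABC} (their Lemma 4.4) --- so your attempt has to be measured against the proof in that reference. Two of your three steps are fine and are essentially what is done there: the short-time smoothing via the conjugation formula $e^{\tau L_0}U=e^{\tau/2}\bigl(e^{(e^\tau-1)\Delta}U\bigr)(e^{\tau/2}\cdot)$ plus a Duhamel/Neumann iteration in which each occurrence of $B$ loses one derivative that the intervening $e^{\cdot L_0}$ recovers at the cost of a half power of time, and the gluing of the short-time bound with the $L^2$ growth bound through the semigroup law, absorbing $\tau^{(\sigma_2-\sigma_1)/2}$ into $e^{\delta\tau/2}$.

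The genuine gap is your first step. The semigroup $e^{\tau L_0}$, with $L_0=\Delta+\frac12(1+\xi\cdot\nabla)$, is \emph{not} analytic on unweighted $L^2(\mathbb{R}^3)$, so the chain ``analytic $\Rightarrow$ norm-continuous for $\tau>0$ $\Rightarrow$ spectral mapping theorem $\Rightarrow$ growth bound $=$ spectral bound $=a$'' fails at its first link. Your own formula shows the obstruction: $e^{\tau L_0}$ contains the dilation group $f\mapsto f(e^{\tau/2}\cdot)$ as a factor, and indeed $L_0$ is an Ornstein--Uhlenbeck-type operator with respect to \emph{Lebesgue} measure, whose $L^2(d\xi)$ spectrum is the entire half-plane $\{\Re z\le-\frac14\}$ (Metafune's computation of $L^p$ spectra of Ornstein--Uhlenbeck operators; equivalently the unweighted-space computation of Gallay--Wayne in self-similar variables). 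A generator with half-plane spectrum is not sectorial, hence generates no analytic semigroup; moreover the Kato perturbation theorem you invoke to pass to $L_{ss}=L_0+B$ \emph{assumes} analyticity of the unperturbed semigroup, so it cannot repair this. Since for general $C_0$-semigroups the growth bound can strictly exceed the spectral bound, your $L^2$ estimate $\Vert e^{\tau L_{ss}}\Vert_{L^2\to L^2}\le C_\delta e^{(a+\delta)\tau}$ is left unproven. The correct replacement, and what \cite{ABC} actually does, is a compactness argument: for any $C_0$-semigroup the growth bound equals the maximum of the \emph{essential} growth bound and the supremum of $\Re\lambda$ over the isolated, finite-multiplicity eigenvalues outside the essential spectral radius (Engel--Nagel, Cor.~IV.2.11, where the spectral mapping theorem does hold for the point spectrum); because $\bar U$ is smooth and compactly supported, $B$ is a relatively compact perturbation (e.g.\ $Be^{sL_0}$ is compact by smoothing plus Rellich, so $e^{\tau L_{ss}}-e^{\tau L_0}$ is compact), whence the essential growth bound of $e^{\tau L_{ss}}$ coincides with that of $e^{\tau L_0}$, which is $\le-\frac14$ by your explicit formula; the eigenvalue part then contributes exactly $a$ by Lemma 2.1. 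With your analyticity paragraph replaced by this essential-spectrum argument, the remainder of your proof goes through.
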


\section{proof of the main result}

\subsection{Strategy of the proof}
We firstly linearize (2.2) at a suitable profile, then decompose the solution as the combination of the profile, linear part and pertubation part. Letting $\bar U(\xi)$ be as in Lemma 2.1 and $\bar\Theta(\xi,\tau)$ a smooth compactly supported function of the form $e^{b\tau}\Theta(\xi)$ for some $b>0$, we solve $F$ and $H$ by plugging in $\bar U$ and $\bar\Theta$ into (1.1), that is
$$
\begin{aligned}
    F=&\partial_\tau\bar U-\frac{1}{2}(1+\xi\cdot\nabla)\bar U-\Delta \bar U+\mathbb{P}(\bar U\cdot\nabla\bar U-\bar\Theta\nabla\frac{1}{|\xi|})\\
    H=&\partial_\tau\bar\Theta-\frac{1}{2}(1+\xi\cdot\nabla)\bar\Theta-\Delta\bar\Theta+\bar U\cdot\nabla\bar\Theta.
\end{aligned}
$$
By direct calculation, $\Vert f(t)\Vert_{L^2}+\Vert h(t)\Vert_{L^2}\lesssim t^{-\frac{3}{4}}$, so $f,h\in L_t^1L_x^2$.\par
Nest, suppose $U=\bar U+U_l+U_p,\Theta=\bar\Theta+\Theta_l+\Theta_p$ is another solution with the same external force and heat source, where the lower index$ l$ stands for linear and $p$ stands for pertubation. Then $(U_l,\Theta_l)$ satisfies the linearized equation, which is
$$
\begin{cases}
    \partial\tau U_l-L_{ss}U_l=0\\
    \partial_\tau\Theta_l+\bar U\cdot\nabla\Theta_l-\Delta\Theta_l=0.
\end{cases}
$$
For simplicity, we apply Lemma 2.1 and pick
\begin{equation}
U_l=\Re(e^{\lambda\tau}\rho),\Theta_l=0.
\end{equation}
Then it remains to solve the equation satisfied by $(U_p,\Theta_p)$, that is
$$
\begin{cases}
    \partial_\tau U_p-L_{ss}U_p+\mathbb{P}(U_l\cdot\nabla U_l+U_l\cdot\nabla U_p+U_p\cdot\nabla U_l+U_p\cdot\nabla U_p-\Theta_p\nabla\frac{1}{|\xi|})=0\\
    \nabla\cdot U_p=0\\
    \partial_\tau\Theta_p-\Delta\Theta_p-\frac{1}{2}(1+\xi\cdot\nabla)\Theta_p+\bar U\cdot\nabla\Theta_p+U_l\cdot\nabla\bar\Theta+U_l\cdot\nabla\Theta_p+U_p\cdot\nabla\bar\Theta+U_p\cdot\nabla\Theta_p=0.
\end{cases}
$$
Using Duhamel formula, the first equation becomes
\begin{equation}
U_p(\tau)=\int_{-\infty}^\tau e^{(\tau-\tau')L_{ss}}\mathbb{P}(-U_l\cdot\nabla U_l-U_l\cdot\nabla U_p-U_p\cdot\nabla U_l-U_p\cdot\nabla U_p+\Theta_p\nabla\frac{1}{|\xi|})(\tau')d\tau'.
\end{equation}
For the third one, denote by $L=\Delta+\frac{1}{2}(1+\xi\cdot\nabla)+\bar U\cdot\nabla$. Then $L$ generates a strongly continuous semigroup, so we rewrite the equation as
\begin{equation}
\Theta_p(\tau)=\int_{_\infty}^\tau e^{(\tau-\tau')L}(-U_l\cdot\nabla\bar\Theta-U_l\cdot\nabla\Theta_p-U_p\cdot\nabla\bar\Theta-U_p\cdot\nabla\Theta_p)(\tau')d\tau'
\end{equation}
In the following section we will prove the existence of solution to the equation above.

\subsection{Estimate of $e^{\tau L}$}

In this section we prove that the semigroup generated by $L$ possesses the same regularity property as the heat semigroup.

\begin{lemma}
Suppose $k\geq m\geq0,\forall\Theta_0\in L^2,\tau>0$, we have
$$
\Vert e^{\tau L}\Theta_0\Vert_{H^k}\lesssim_{k,m}\max\{\tau^{-\frac{k-m}{2}},1\}\Vert\Theta_0\Vert_{H^m}
$$
\end{lemma}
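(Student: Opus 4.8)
The plan is to split $L=L_0+\bar U\cdot\nabla$, where $L_0=\Delta+\frac12(1+\xi\cdot\nabla)$ is the generator of the rescaled heat flow, and to treat the transport term $\bar U\cdot\nabla$ as a lower-order perturbation. First I would establish the claimed smoothing estimate for the free semigroup $e^{\tau L_0}$, and then promote it to $e^{\tau L}$ by Duhamel's formula together with a singular-kernel Gr\"onwall argument.

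The semigroup $e^{\tau L_0}$ admits an explicit representation in terms of the Euclidean heat semigroup: writing $\mu=e^{\tau/2}$ and $s=e^{\tau}-1$, one checks directly (by differentiating in $\tau$ and matching $\partial_\tau$ against $\Delta_\xi+\frac12(1+\xi\cdot\nabla_\xi)$) that
$$
(e^{\tau L_0}\Theta_0)(\xi)=\mu\,(e^{s\Delta}\Theta_0)(\mu\xi).
$$
Passing to the Fourier side, $\Vert e^{\tau L_0}\Theta_0\Vert_{H^k}^2$ is comparable to $\mu^{-1}\int_{\mathbb{R}^3}\max(1,(\mu|\zeta|)^{2k})\,e^{-2s|\zeta|^2}|\hat\Theta_0(\zeta)|^2\,d\zeta$. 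For $\tau\lesssim1$ one has $\mu\approx1$, $s\approx\tau$, and the elementary bound $\sup_{r}r^{2(k-m)}e^{-2sr^2}\lesssim s^{-(k-m)}$ produces the factor $\tau^{-(k-m)/2}$. For $\tau\gtrsim1$ the growth carried by the rescaling factor $\mu^{2k}$ at high frequencies is defeated by the Gaussian $e^{-2s|\zeta|^2}$, since $s\approx\mu^2$; a short case analysis in $|\zeta|$ shows the multiplier is in fact bounded by $\mu^{-1}=e^{-\tau/2}$ times $\max(1,|\zeta|^{2m})$, so that $\Vert e^{\tau L_0}\Theta_0\Vert_{H^k}\lesssim e^{-\tau/4}\Vert\Theta_0\Vert_{H^m}$ for $\tau\ge1$. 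It is essential here to work with the full (inhomogeneous) $H^k$ norm rather than the homogeneous one, since each homogeneous level by itself would appear to grow.

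To pass to $e^{\tau L}$ I would use Duhamel's formula
$$
e^{\tau L}\Theta_0=e^{\tau L_0}\Theta_0+\int_0^\tau e^{(\tau-\tau')L_0}\,\bar U\cdot\nabla\,(e^{\tau'L}\Theta_0)\,d\tau'.
$$
Since $\bar U\in C_c^\infty$, multiplication by $\bar U$ preserves every $H^j$ and $\bar U\cdot\nabla$ maps $H^{j+1}\to H^j$ boundedly; applying the free estimate with one derivative to spare converts the kernel into $\max((\tau-\tau')^{-1/2},1)$, carrying exponential decay for large gaps, whose singularity at $\tau'=\tau$ is integrable. I would then run the iteration in the weighted norm $\sup_\tau\max(\tau^{-(k-m)/2},1)^{-1}\Vert e^{\tau L}\Theta_0\Vert_{H^k}$ and close it with a fractional Gr\"onwall inequality. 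The long-time decay of the free semigroup is exactly what keeps the time integral bounded uniformly in $\tau$, so the perturbation produces no growth.

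The main obstacle is this long-time control. A direct energy estimate fails: commuting $\partial^\alpha$ through $\frac12\xi\cdot\nabla$ yields a term $+\frac{|\alpha|}{2}\Vert\partial^\alpha\Theta\Vert_{L^2}^2$ of the wrong sign at each derivative level, which an energy argument can only absorb at the cost of exponential growth $e^{C\tau}$ and with no gain of regularity. Circumventing this is precisely what forces the explicit kernel into play: the Gaussian factor supplies both the smoothing $\tau^{-(k-m)/2}$ and, through its super-exponential suppression of high frequencies, the long-time boundedness. For the perturbed semigroup the remaining subtlety is that $\bar U\cdot\nabla$, being divergence-free, is skew-adjoint on $L^2$ and hence creates no $L^2$ growth, which is what allows the decay of $e^{\tau L_0}$ to survive the Duhamel iteration.
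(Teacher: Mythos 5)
Your explicit representation of the free semigroup, $(e^{\tau L_0}\Theta_0)(\xi)=e^{\tau/2}\,(e^{(e^\tau-1)\Delta}\Theta_0)(e^{\tau/2}\xi)$, is correct, and the Fourier-side derivation of both the short-time smoothing $\tau^{-(k-m)/2}$ and the long-time decay $e^{-\tau/4}$ of $e^{\tau L_0}$ is sound (and would even handle fractional $k,m$ directly, where the paper resorts to interpolation). The genuine gap is in the passage to $e^{\tau L}$: the global-in-time fractional Gr\"onwall step fails. After taking $H^k$ norms in Duhamel you face
$$
g(\tau)\le A(\tau)+C_{\bar U}\int_0^\tau K(\tau-\tau')\,g(\tau')\,d\tau',\qquad K(\sigma)=\max\{\sigma^{-1/2},1\}e^{-\sigma/4},
$$
and the decay of $e^{\tau L_0}$ only makes $K$ integrable; it does not make the convolution operator a contraction on $(0,\infty)$. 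Whenever $C_{\bar U}\Vert K\Vert_{L^1}>1$ (and $C_{\bar U}$ scales with $\Vert\bar U\Vert_{C^1}$, which by Remark 2.2 is deliberately taken large), the inequality admits solutions growing like $e^{\lambda\tau}$ with $\lambda$ determined by $C_{\bar U}\widehat K(\lambda)=1$, so Gr\"onwall yields only $g(\tau)\lesssim e^{C(\bar U)\tau}$ --- incompatible with the claimed uniform bound $\max\{\tau^{-(k-m)/2},1\}$, no matter how the implicit constant depends on $\bar U$. Your closing observation that $\bar U\cdot\nabla$ is skew-adjoint on $L^2$ is exactly the missing ingredient, but a norm-based Duhamel iteration is structurally unable to exploit it: taking norms inside the time integral destroys the cancellation, so skew-adjointness cannot ``survive the Duhamel iteration'' --- it must be used non-perturbatively.

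The repair, which is essentially what the paper does, is a three-step assembly rather than one global iteration: first prove $\Vert e^{\tau L}\Theta_0\Vert_{L^2}\le e^{-\tau/4}\Vert\Theta_0\Vert_{L^2}$ by the energy identity on the full equation (the transport term drops out since $\nabla\cdot\bar U=0$); second, run your Duhamel iteration only on a bounded interval, say $\tau\in(0,2]$, where the Gr\"onwall factor $e^{C(\bar U)}$ is a harmless constant and the singular kernel delivers $\tau^{-(k-m)/2}$; third, for large $\tau$ use the semigroup factorization $e^{\tau L}=e^{L}e^{(\tau-1)L}$ to combine unit-time smoothing $L^2\to H^k$ with the global $L^2$ decay. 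The paper implements this same scheme in physical coordinates: the equation becomes $\partial_t\theta-\Delta\theta=-\bar u\cdot\nabla\theta$ from time $t=1$, the $L^2$ norm is non-increasing by the divergence-free structure, the time-weighted functionals $\phi_k(t)=(t-1)^k\int|D^k\theta|^2\,dx$ together with induction gain one derivative per unit time (this replaces your explicit kernel, and also circumvents the wrong-sign commutator $[\partial^\alpha,\tfrac12\xi\cdot\nabla]=\tfrac{|\alpha|}{2}\partial^\alpha$ that you correctly identified, since the drift $\tfrac12\xi\cdot\nabla$ disappears in physical variables), and large times are handled by iterating $\Vert\Theta(\tau)\Vert_{H^k}\lesssim\Vert\Theta(\tau-1)\Vert_{H^{k-1}}\lesssim\cdots\lesssim\Vert\Theta(\tau-k)\Vert_{L^2}\le\Vert\Theta_0\Vert_{L^2}$. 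So your short-time analysis is a valid (arguably cleaner) alternative to the paper's weighted energy estimates, but as written the long-time half of the lemma is not proved.
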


\begin{proof}
We first prove the case when $k,m$ are integers. Then the general cases follow from interpolation.\par
Let $\Theta(\tau)=e^{\tau L}\Theta_0$. Then $\Theta(\tau)$ satisfies the following equation
$$
\begin{cases}
    \partial_\tau\Theta-\Delta \Theta-\frac{1}{2}(1+\xi\cdot\nabla)\Theta=-\bar U\cdot\nabla\Theta\\
    \Theta(0)=\Theta_0.
\end{cases}
$$
Then in natural coordinates $\theta(t)$ satisfies
\begin{equation}
\begin{cases}
    \partial_t\theta-\Delta\theta=-\bar u\cdot\nabla\theta\\
    \theta(1)=\theta_0.
\end{cases}
\end{equation}
Multiply by $\theta$ and integrate we find that
$$
\frac{1}{2}\frac{d}{dt}\ii \theta^2dx+\ii |\nabla\theta|^2dx=-\ii\theta\bar u\cdot\nabla\theta dx=0
$$
which means
$$
\lt{\Theta(\tau)}=t^{-\frac{1}{4}}\lt{\theta(t)}\leq t^{-\frac{1}{4}}\lt{\theta_0}=t^{-\frac{1}{4}}\lt{\Theta_0}<\lt{\Theta_0}.
$$
Let $\phi_k(t)=(t-1)^k\ii|D^k\theta(t)|^2dx$, then
$$
\begin{aligned}
    \dot\phi_k(t)=&k(t-1)^{k-1}\ii|D^k\theta|^2dx+(-1)^k(t-1)^k\ii\Delta^k\theta\cdot\partial_t\theta dx\\
    =&k(t-1)^{k-1}\ii|D^k\theta|^2dx+(-1)^k(t-1)^k\ii\Delta^k\theta(\Delta\theta-\nabla\cdot(\bar u\theta))dx\\
    \leq&k(t-1)^{k-1}\ii|D^k\theta|^2dx-(t-1)^k\ii|D^{k+1}\theta|^2dx\\
    &+(t-1)^k\ii|D^{k+1}\theta||D^k(\bar u\theta)|dx\\
    \leq&k(t-1)^{k-1}\ii|D^k\theta|^2dx+C(t-1)^k\ii|D^k(\bar u\theta)|^2dx.
\end{aligned}
$$
Since $\Vert D^k\bar u\Vert_{L^\infty}\sim t^\frac{k+1}{2}$, by Leibniz rule we get
$$
\dot\phi(t)\leq k(t-1)^{k-1}\Vert\theta\Vert_{H^k}^2~C(t-1)^kt^{-1}\Vert\theta\Vert_{H^k}^2\lesssim(t-1)^{k-1}\Vert\theta\Vert_{H^k}^2.
$$
Therefore
\begin{equation}
(t-1)^k\ii|D^k\theta(t)|^2dx=\phi(t)\lesssim(t-1)^{k-1}\int_1^t\Vert\theta(s)\Vert_{H^k}^2ds.
\end{equation}
Now we come to estimate the time integral of $H^k$ norm. Differentiate (3.4) for $k-1$ times we obtain that
$$
\partial_tD^{k-1}\theta-\Delta D^{k-1}\theta=-D^{k-1}(\bar u\cdot\nabla\theta).
$$
Multiply by $D^{k-1}\theta$ and integrate by parts, we obtain that
$$
\frac{1}{2}\frac{d}{dt}\ii|D^{k-1}\theta|^2dx+\ii|D^k\theta|^2dx=\ii D^k\theta D^{k-1}(\bar u\theta)dx.
$$
Since the highest order term $\ii D^{k-1}\theta\bar u\cdot\nabla D^{k-1}\theta dx$ vanishes, by Cauchy-Schwartz inequality and interpolation we have
$$
\frac{1}{2}\frac{d}{dt}\ii|D^{k-1}\theta|^2dx+\frac{1}{2}\ii|D^k\theta|^2dx\leq\Vert\theta\Vert_{H^{k-1}}^2.
$$
Integrate in time we see that
$$
\ii|D^k\theta(t)|^2dx+\int_1^t\ii|D^k\theta(s)|^2dxds\lesssim\ii|D^{k-1}\theta(1)|^2dx+\int_1^t\Vert\theta(s)\Vert_{H^{k-1}}^2ds.
$$
Therefore by induction we can show that
$$
\int_1^t\ii|D^k\theta(s)|^2dxds\lesssim\Vert\theta\Vert_{H^{k-1}}^2.
$$
Combining with (3.5), we proved that
$$
\ii|D^k\theta(t)^2dx\lesssim(t-1)^{-1}\Vert\theta(1)\Vert_{H^{k-1}}^2.
$$
Then it follows that
$$
\lt{D^k\Theta(\tau)}=\lt{D^k\theta(t)}t^{\frac{k}{2}-\frac{1}{4}}\lesssim(t-1)^{-\frac{1}{2}}t^{\frac{k}{2}-\frac{1}{4}}\Vert\theta_0\Vert_{H^{k-1}}\leq\tau^{-\frac{1}{2}}e^{(\frac{k}{2}-\frac{1}{4})\tau}\Vert\Theta_0\Vert_{H^{k-1}}.
$$
For $0<\tau<k$, we see that $\lt{D^k\Theta(\tau)}\lesssim\tau^{-\frac{1}{2}}\Vert\Theta_0\Vert_{H^{k-1}}$. For $\tau>k$, using the short time estimate we have
$$
\Vert\Theta(\tau)\Vert_{H^k}\lesssim\Vert\Theta(\tau-1)\Vert_{H^{k-1}}\lesssim\dots\lesssim\Vert\Theta(\tau-k)\Vert\leq\lt{\Theta_0}.
$$
Therefore we proved the lemma for $k-m=1$. The general cases follows from induction and interpolation.
\end{proof}

\subsection{estimates}
Let
$$
\begin{aligned}
    X=&\{U\in C_tH^N((-\infty,\tau_0)\times\mathbb{R}^3:\sup_{\tau<\tau_0}e^{-\beta\tau}\Vert U(\tau)\Vert_{H^N}<\infty\},\\
    Y=&\{\Theta\in C_tH^{N+1}((-\infty,\tau_0)\times\mathbb{R}^3:\sup_{\tau<\tau_0}e^{-\gamma\tau}\Vert\Theta(\tau)\Vert_{H^{N+1}}<\infty\},
\end{aligned}
$$
equipped with the norm
$$
\begin{aligned}
    \Vert U\Vert_X=&\sup_{\tau<\tau_0}e^{-\beta\tau}\Vert U(\tau)\Vert_{H^N},\\
    \Vert\Theta\Vert_Y=&\sup_{\tau<\tau_0}e^{-\gamma\tau}\Vert\Theta(\tau)\Vert_{H^{N+1}}.
\end{aligned}
$$
Here $\beta,\gamma>0$ are to be determined, $\frac{3}{2}<N<2$.\par
We start by estimating each term on the right hand side of (3.2). Using (3.1) and Lemma 2.3, with $H^N$ being an algebra, we get
$$
\begin{aligned}
\hn{e^{(\tau-\tau')L_{ss}}\mathbb{P}(U_l\cdot\nabla U_l)(\tau')}\leq&e^{2a\tau'}e^{(a+\delta)(\tau-\tau')}\hn{\rho\nabla\rho}\\
\lesssim&e^{2a\tau'}e^{(\tau-\tau')(a+\delta)}
\end{aligned}
$$

$$
\begin{aligned}
\hn{e^{(\tau-\tau')L_{ss}}\mathbb{P}(U_l\cdot\nabla U_p)(\tau')}\lesssim&e^{(a+\delta)(\tau-\tau')}(\tau-\tau')^{-\frac{1}{2}}\Vert\nabla(U_lU_p)(\tau')\Vert_{H^{N-1}}\\
\lesssim&e^{a\tau'}e^{(\tau-\tau')(a+\delta)}(\tau-\tau')^{-\frac{1}{2}}\hn{U_p(\tau')}\\
\leq&e^{a\tau'}e^{\beta\tau'}e^{(a+\delta)(\tau-\tau')}(\tau-\tau')^{-\frac{1}{2}}\Vert U_p\Vert_X
\end{aligned}
$$

$$
\begin{aligned}
\hn{e^{(\tau-\tau')L_{ss}}\mathbb{P}(U_p\cdot\nabla U_l)(\tau')}\lesssim&e^{(a+\delta)(\tau-\tau')}\hn{U_p\nabla U_l(\tau')}\\
\lesssim&e^{a\tau'}e^{(a+\delta)(\tau-\tau')}\hn{U_p(\tau')}\\
\lesssim&e^{a\tau'}e^{\beta\tau'}e^{(a+\delta)(\tau-\tau')}\Vert U_p\Vert_X
\end{aligned}
$$

$$
\begin{aligned}
\hn{e^{(\tau-\tau')L_{ss}}\mathbb{P}(U_p\cdot\nabla U_p)(\tau')}\lesssim&e^{(a+\delta)(\tau-\tau')}(\tau-\tau')^{-\frac{1}{2}}\Vert\nabla(U_pU_p)(\tau')\Vert_{H^{N-1}}\\
\lesssim&e^{(a+\delta)(\tau-\tau')}(\tau-\tau')^{-\frac{1}{2}}\hn{U_p(\tau')}^2\\
\leq&e^{2\beta\tau'}e^{(a+\delta)(\tau-\tau')}(\tau-\tau')^{-\frac{1}{2}}\Vert U_p\Vert_X^2
\end{aligned}
$$

To estimate the gravity term, we exploit that the Leray projector kills gradient fields. Therefore we have $\mathbb{P}(\Theta\nabla\frac{1}{|\xi|})=-\mathbb{P}(\frac{1}{|\xi|}\nabla\Theta)$. Then using Hardy inequality, we get

$$
\begin{aligned}
\hn{e^{(\tau-\tau')L_{ss}}\mathbb{P}(\Theta\nabla\frac{1}{|\xi|})(\tau')}\lesssim&e^{(a+\delta)(\tau-\tau')}(\tau-\tau')^{-\frac{N}{2}}\lt{\frac{1}{|\xi|}\nabla\Theta(\tau')}\\
\lesssim&e^{(a+\delta)(\tau-\tau')}(\tau-\tau')^{-\frac{N}{2}}\hn{\Theta_p(\tau')}\\
\lesssim&e^{(a+\delta)(\tau-\tau')}(\tau-\tau')^{-\frac{N}{2}}\Vert\Theta_p(\tau')\Vert_{H^2}\\
\leq&e^{\gamma\tau'}e^{(a+\delta)(\tau-\tau')}(\tau-\tau')^{-\frac{N}{2}}\Vert\Theta_p\Vert_Y
\end{aligned}
$$

Therefore, integrate the five inequalities above from $-\infty$ to $\tau$ we see that

\begin{equation}
    \hn{\int_{-\infty}^\tau e^{(\tau-\tau')L_{ss}}\mathbb{P}(U_l\cdot\nabla U_l)(\tau')d\tau'}\lesssim e^{2a\tau}
\end{equation}

\begin{equation}
    \hn{\int_{-\infty}^\tau e^{(\tau-\tau')L_{ss}}\mathbb{P}(U_l\cdot\nabla U_p)(\tau')d\tau'}\lesssim e^{(a+\beta)\tau}\Vert U_p\Vert_X
\end{equation}

\begin{equation}
    \hn{\int_{-\infty}^\tau e^{(\tau-\tau')L_{ss}}\mathbb{P}(U_p\cdot\nabla U_l)(\tau')d\tau'}\lesssim e^{(a+\beta)\tau}\Vert U_p\Vert_X
\end{equation}

\begin{equation}
    \hn{\int_{-\infty}^\tau e^{(\tau-\tau')L_{ss}}\mathbb{P}(U_p\cdot\nabla U_p)(\tau')d\tau'}\lesssim e^{2\beta\tau}\Vert U_p\Vert_X^2
\end{equation}

\begin{equation}
    \hn{\int_{-\infty}^\tau e^{(\tau-\tau')L_{ss}}\mathbb{P}(U\Theta_p\nabla\frac{1}{|\xi|})(\tau')d\tau'}\lesssim e^{\gamma\tau}\Vert\Theta_p\Vert_Y
\end{equation}

In this process we must set
\begin{equation}
    a>\delta,\beta>\delta,\gamma>a+\delta
\end{equation}
to make the integrals converge.\par

Now we estimate (3.3). Using Lemma 3.1,

$$
\begin{aligned}
\hnn{e^{(\tau-\tau')L}(U_l\cdot\nabla\bar\Theta)(\tau')}\lesssim&\max\{(\tau-\tau')^{-\frac{1}{2}},1\}\hn{U_l\cdot\nabla\bar\Theta(\tau')}\\
\lesssim&\max\{(\tau-\tau')^{-\frac{1}{2}},1\}\hn{U_l(\tau')}\hnn{\bar\Theta(\tau')}\\
\lesssim&\max\{(\tau-\tau')^{-\frac{1}{2}},1\}e^{(a+b)\tau'}
\end{aligned}
$$

$$
\begin{aligned}
\hnn{e^{(\tau-\tau')L}(U_l\cdot\nabla\Theta_p)(\tau')}\lesssim&\max\{(\tau-\tau')^{-\frac{1}{2}},1\}\hn{U_l\cdot\nabla\Theta_p(\tau')}\\
\lesssim&\max\{(\tau-\tau')^{-\frac{1}{2}},1\}\hn{U_l(\tau')}\hnn{\Theta_p(\tau')}\\
\lesssim&\max\{(\tau-\tau')^{-\frac{1}{2}},1\}e^{(a+\gamma)\tau'}\Vert\Theta_p\Vert_Y
\end{aligned}
$$

$$
\begin{aligned}
\hnn{e^{(\tau-\tau')L}(U_p\cdot\nabla\bar\Theta)(\tau')}\lesssim&\max\{(\tau-\tau')^{-\frac{1}{2}},1\}\hn{U_p\cdot\nabla\bar\Theta(\tau')}\\
\lesssim&\max\{(\tau-\tau')^{-\frac{1}{2}},1\}\hn{U_p(\tau')}\hnn{\bar\Theta(\tau')}\\
\lesssim&\max\{(\tau-\tau')^{-\frac{1}{2}},1\}e^{(\beta+\gamma)\tau'}
\end{aligned}
$$

Integrate in $\tau'$ we get

\begin{equation}
\hnn{\int_{-\infty}^\tau e^{(\tau-\tau')L}(U_l\cdot\nabla\bar\Theta)(\tau')d\tau'}\lesssim e^{(a+b)\tau}
\end{equation}

\begin{equation}
\hnn{\int_{-\infty}^\tau e^{(\tau-\tau')L}(U_l\cdot\nabla\Theta_p)(\tau')d\tau'}\lesssim e^{(a+\gamma)\tau}\Vert\Theta_p\Vert_Y
\end{equation}

\begin{equation}
\hnn{\int_{-\infty}^\tau e^{(\tau-\tau')L}(U_p\cdot\nabla\bar\Theta)(\tau')d\tau'}\lesssim e^{(\beta+b)\tau}\Vert U_p\Vert_X
\end{equation}

\begin{equation}
\hnn{\int_{-\infty}^\tau e^{(\tau-\tau')L}(U_p\cdot\nabla\Theta_p)(\tau')d\tau'}\lesssim e^{(\beta+\gamma)\tau}
\end{equation}

\subsection{contraction mapping}
Let $B=\{(U,\Theta):\Vert U\Vert_X\leq M,\Vert\Theta\Vert_Y\leq M,\nabla\cdot U=0\}$ where $0<M<1$ is to be determined, and set
$$
\begin{aligned}
    \Phi(U_p,\Theta_p)=&(\Phi_1(U_p,\Theta_p),\Phi_2(U_p,\Theta_p))\\
    \Phi_1(U_p,\Theta_p)(\tau)=&\int_{_\infty}^\tau e^{(\tau-\tau')L_{ss}}\mathbb{P}(-U_l\cdot\nabla U_l-U_l\cdot\nabla U_p-U_p\cdot\nabla U_l\\
    &-U_p\cdot\nabla U_p+\Theta_p\nabla\frac{1}{|\xi|})(\tau') d\tau'\\
    \Phi_2(U_p,\Theta_p)(\tau)=&\int_{_\infty}^\tau e^{(\tau-\tau')L}(-U_l\cdot\nabla\bar\Theta-U_l\cdot\nabla\Theta_p-U_p\cdot\nabla\bar\Theta-U_p\cdot\nabla\Theta_p)(\tau')d\tau'.
\end{aligned}
$$
We shall prove $\Phi$ is a contraction mapping on $B$. It follows from (3.6)-(3.10) and (3.12)-(3.15) that
$$
\begin{aligned}
    \Vert\Phi_1(U_p,\Theta_p)\Vert_X\leq&Ce^{(-\beta+\min\{a+\beta,2\beta,\gamma\})\tau_0}M+Ce^{(2a-\beta)\tau_0},\\
    \Vert\Phi_2(U_p,\Theta_p)\Vert_Y\leq&Ce^{(-\gamma+\min\{a+\gamma,\beta+b,\beta+\gamma\})\tau_0}M+Ce^{(a+b-\gamma)\tau_0}.
\end{aligned}
$$
Therefore, once we set
\begin{equation}
2a>\beta,\gamma>\beta,\beta+b>\gamma,a+b>\gamma,
\end{equation}
$\Phi$ maps $B$ to itself if $\tau_0$ is sufficiently small.\par
Now let $(U_1,\Theta_1),(U_2,\Theta_2)\in B$, and $(U,\Theta)=(U_1,\Theta_1)-(U_2,\Theta_2)$. We have
$$
\begin{aligned}
(\Phi_1(U_1,\Theta_1)-\Phi_1(U_2,\Theta_2))(\tau)=\int_{-\infty}^\tau e^{(\tau-\tau')L_{ss}}\mathbb{P}(-U_l\cdot\nabla U-U_\cdot\nabla U_l\\
-U_1\cdot\nabla U- U\cdot\nabla U_2+\Theta\nabla\frac{1}{|\xi|})(\tau')d\tau'
\end{aligned}
$$

$$
(\Phi_2(U_1,\Theta_1)-\Phi_2(U_2,\Theta_2))(\tau)=\int_{-\infty}^\tau e^{(\tau-\tau')L}(-U_l\cdot\nabla\Theta-U\cdot\nabla\bar\Theta-U_1\cdot\nabla\Theta-U\cdot\nabla\Theta_2)(\tau')d\tau'.
$$
Again using (3.6)-(3.10) and (3.12)-(3.15) we yield
$$
\Vert\Phi_1(U_1,\Theta_1)-\Phi_1(U_2,\Theta_2)\Vert_X\leq Ce^{(-\beta+\min\{a+\beta,2\beta,\gamma\})\tau_0}(\Vert U\Vert_X+\Vert\Theta\Vert_Y)
$$
$$
\Vert\Phi_2(U_1,\Theta_1)-\Phi_2(U_2,\Theta_2)\Vert_Y\leq Ce^{(-\gamma+\min\{a+\gamma,\beta+b,\beta+\gamma\})\tau_0}(\Vert U\Vert_X+\Vert\Theta\Vert_Y).
$$
Therefore $\Phi$ is a contraction mapping if $\tau'$ is sufficiently small. Hence there exists a unique $(U_p,\Theta_p)\in B$ that solves (3.2).\par
Finally, to avoid that $u_p$ cancels off $u_l$, in addition to (3.11) and (3.16) we further require
$$
\beta>a.
$$
\begin{remark}
    Multiplying the eigenfunction by different constants we get different $U_l$'s, say, $U_l^1$ and $U_l^2$. And we have proved that there exists $U_p^1,U_p^2,\theta_p^1,\theta_p^2$ such that $(\bar U+U_l^i+U_p^i,\bar\theta+\theta_p^i),i=1,2$ solves (1.1). Then their difference
    $$
    \hn{U_l^1+U_p^1-U_l^2-U_p^2}\geq C_1e^{a\tau}-C_2e^{\beta\tau}>0
    $$
    for small $\tau$, so we obtain infinite solutions by taking different $U_l$'s.
\end{remark}

\end{document}